\definecolor{chianti}{rgb}{0.6,0,0}
\definecolor{meretale}{rgb}{0,0,.6}
\definecolor{leaf}{rgb}{0,.35,0}
\newtheorem{theorem}{Theorem}[section]
\newtheorem{corollary}[theorem]{Corollary}
\numberwithin{equation}{theorem}
\def\Spec{\operatorname{Spec}}
\def\frakm{\mathfrak{m}}
\def\frakn{\mathfrak{n}}
\def\frakp{\mathfrak{p}}
\def\frakM{\mathfrak{M}}
\def\frakN{\mathfrak{N}}
\def\FFp{\mathbb{F}_{\!p}}
\def\NN{\mathbb{N}}
\def\ZZ{\mathbb{Z}}
\def\calP{\mathcal{P}}
\def\balpha{{\boldsymbol{\alpha}}}
\def\bgamma{{\boldsymbol{\gamma}}}
\def\ge{\geqslant}
\def\le{\leqslant}
\def\bar{\overline}
\def\to{\longrightarrow}
\def\mapsto{\longmapsto}
\begin{document}
\title[Flat morphisms with regular fibers do not preserve $F$-rationality]{Flat morphisms with regular fibers\\ do not preserve $F$-rationality}

\author{Eamon Quinlan-Gallego}
\address{Department of Mathematics, University of Utah, Salt Lake City, UT~84112, USA}
\email{eamon.quinlan@utah.edu}

\author{Austyn Simpson}
\address{Department of Mathematics, University of Michigan, Ann Arbor, MI 48109 USA}
\email{austyn@umich.edu}

\author{Anurag K. Singh}
\address{Department of Mathematics, University of Utah, Salt Lake City, UT~84112, USA}
\email{anurag.singh@utah.edu}

\thanks{Quinlan-Gallego was supported by the NSF RTG grant DMS~\#1840190 and NSF postdoctoral fellowship DMS~\#2203065, Simpson by NSF postdoctoral fellowship DMS~\#2202890, and Singh by NSF grants DMS~\#1801285, DMS~\#2101671, and DMS~\#2349623.}

\begin{abstract}
For each prime integer $p>0$ we construct a standard graded $F$-rational ring~$R$, over a field $K$ of characteristic $p$, such that $R\otimes_K\overline{K}$ is not $F$-rational. By localizing we obtain a flat local homomorphism $(R, \mathfrak{m}) \to (S, \mathfrak{n})$ such that $R$ is $F$-rational,~$S/\mathfrak{m} S$ is regular (in fact, a field), but $S$ is not $F$-rational. In the process we also obtain standard graded $F$-rational rings $R$ for which $R\otimes_K R$ is not $F$-rational.
\end{abstract}
\maketitle

\section{Introduction}
\label{section:intro}

Let $\calP$ denote a local property of noetherian rings. The following types of \emph{ascent} have been studied extensively; recall that for $K$ a field, a noetherian $K$-algebra~$A$ is \emph{geometrically regular} over $K$ if $A\otimes_K L$ is regular for each finite extension field $L$ of $K$.

\begin{enumerate}
\item[{$(\hyperref[ASC1]{\textup{ASC}_{\textup{I}}})$}]
\label{ASC1}
For a flat local homomorphism $(R,\frakm)\to (S,\frakn)$ of excellent local rings, if $R$ is $\calP$ and the closed fiber $S/\frakm S$ is regular, then $S$ is $\calP$.

\item[{$(\hyperref[ASC2]{\textup{ASC}_{\textup{II}}})$}]
\label{ASC2}
For a flat local homomorphism $(R,\frakm)\to (S,\frakn)$ of excellent local rings, if $R$ is $\calP$ and the closed fiber $S/\frakm S$ is geometrically regular over $R/\frakm$, then $S$ is $\calP$.
\end{enumerate}

Our main interest here is when $\calP$ is $F$-rationality, a property rooted in Hochster and Huneke's theory of tight closure \cite{HH:JAMS}: a local ring $(R,\frakm)$ of positive prime characteristic is \emph{$F$-rational} if $R$ is Cohen-Macaulay and each ideal generated by a system of parameters for $R$ is tightly closed. Smith~\cite{Smith:ratsing} proved that $F$-rational rings have rational singularities, while Hara~\cite{Hara:AJM} and Mehta-Srinivas~\cite{Mehta:Srinivas} independently proved that rings with rational singularities have $F$-rational type. Rational singularities of characteristic zero satisfy~$(\hyperref[ASC1]{\textup{ASC}_{\textup{I}}})$, as proven by Elkik~\cite[Th\'eor\`eme~5]{Elkik}.

In the situation of $(\hyperref[ASC2]{\textup{ASC}_{\textup{II}}})$, geometric regularity of the closed fiber $R/\frakm\to S/\frakm S$ implies that of each fiber $k(\frakp)\to S\otimes_R k(\frakp)$ for $\frakp\in\Spec R$, see \cite[Page~297]{Andre}. The ascent~$(\hyperref[ASC2]{\textup{ASC}_{\textup{II}}})$ holds for $F$-rationality; this, and its variations, are due to V\'elez \cite[Theorem~3.1]{Velez}, Enescu~\cite[Theorem~2.27]{Enescu:2000}, Hashimoto~\cite[Theorem~6.4]{Hashimoto:cmfi}, and Aberbach-Enescu~\cite[Theorem~4.3]{Aberbach:Enescu}. A common thread amongst these is that each affirmative answer requires assumptions along the lines that the fibers are \emph{geometrically} regular. The situation is similar for $F$-injectivity in this regard; a local ring $(R,\frakm)$ of positive prime characteristic is \emph{$F$-injective} if the Frobenius action on local cohomology modules
\[
F\colon H^k_\frakm(R)\to H^k_\frakm(R)
\]
is injective for each $k\ge 0$. Datta and Murayama~\cite[Theorem~A]{Dutta:Murayama} proved that if $(R,\frakm)$ is~$F$-injective, and $(R,\frakm)\to (S,\frakn)$ is a flat local map such that $S/\frakm S$ is Cohen-Macaulay and \emph{geometrically} $F$-injective over $R/\frakm$, then $S$ is $F$-injective; see also~\cite[Theorem~4.3]{Enescu:2009} and \cite[Corollary~5.7]{Hashimoto:cmfi}. We present examples demonstrating that the geometric assumptions are indeed required, i.e., that $F$-rationality and~$F$-injectivity do not satisfy $(\hyperref[ASC1]{\textup{ASC}_{\textup{I}}})$:

\begin{theorem}
\label{theorem:main:theorem}
For each prime integer $p>0$, there exists a flat local ring homomorphism~$(R,\frakm)\to (S,\frakn)$ of excellent local rings of characteristic $p$, such that the ring $R$ is~$F$-rational, $S/\frakm S$ is regular, but $S$ is not $F$-rational or even $F$-injective.
\end{theorem}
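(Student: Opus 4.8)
The plan is to deduce the theorem from a graded construction; the passage from the graded ring to the desired flat local homomorphism is soft, and essentially all of the work lies in producing the ring. Concretely, suppose we have built a standard graded ring $R=\bigoplus_{i\ge 0}R_i$ with $R_0=K$ a field of characteristic $p$, such that $R$ is $F$-rational while, for some finite field extension $L/K$, the ring $R\otimes_K L$ fails to be $F$-injective at its homogeneous maximal ideal. Let $\frakm=R_+$; then $R_\frakm$ is excellent (essentially of finite type over $K$) and $F$-rational. Put $S:=R_\frakm\otimes_K L$. Since $L$ is $K$-free, $R_\frakm\to S$ is module-finite, free, hence flat; its closed fiber is $S/\frakm S=K\otimes_K L=L$, a field, so $S$ is local with a unique maximal ideal $\frakn$ over $\frakm R_\frakm$, and $(R_\frakm,\frakm R_\frakm)\to(S,\frakn)$ is a flat local homomorphism of excellent local rings with regular --- indeed field --- closed fiber. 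As $S$ is $R\otimes_K L$ localized at its homogeneous maximal ideal, $S$ is not $F$-injective, and $S$ is Cohen--Macaulay (flat over the Cohen--Macaulay $R_\frakm$ with Artinian fiber), hence not $F$-rational. One further observes that $K$ must be imperfect: if $K$ were perfect, every field extension of $K$ would be separable, the geometric fibers over $R$ would be regular, and $F$-rationality would ascend by the results recalled above. Passing from $\bar K$ to a finite subextension $L$ is also harmless: $\bar K$ is a filtered union of finite extensions of $K$, failure of $F$-injectivity is witnessed at a single point defined over some finite $L_0/K$, and since finite separable extensions have geometrically regular fibers one may, after enlarging, take $L/K$ finite and purely inseparable.

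Thus everything reduces to constructing $R$. It must be genuinely singular: if $R_\frakm$ were regular then $S$ would be regular, being flat over it with regular closed fiber, and hence $F$-rational. And it must be $F$-rational, i.e.\ a Cohen--Macaulay normal domain with $0^{*}_{H^{d}_{\frakm}(R_\frakm)}=0$, where $d=\dim R$. The obstruction to $F$-injectivity after base change is best understood Frobenius-theoretically. Writing $H:=H^{d}_{\frakm}(R_\frakm)$, flat base change identifies $H^{d}_{\frakn}(S)\cong H\otimes_K L$, with natural Frobenius the tensor (over $\FFp$) of the Frobenius $F_H$ on $H$ with the Frobenius $F_L\colon L\to L$, $x\mapsto x^p$. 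Since $R$ is $F$-rational, $F_H$ is injective; but $F_L$ is not surjective ($L$ being imperfect), and $F_H\otimes F_L$ can nevertheless fail to be injective --- equivalently $0^{*}_{H\otimes_K L}$ can be nonzero --- provided the twisted Frobenius module $H$ is sufficiently nontrivial and is coupled tightly enough to the coefficient field. So the task is: for each prime $p$, exhibit an explicit standard graded algebra over an imperfect field, say $K=\FFp(t_1,\dots,t_n)$ with $n$ depending on $p$, that is $F$-rational but whose top local cohomology carries a Frobenius structure so entangled with the parameters $t_i$ that extending scalars to $K(t_1^{1/p},\dots,t_n^{1/p})$ makes a nonzero element killed by a power of Frobenius --- equivalently a nonzero element of the tight closure of zero, or of a parameter ideal --- appear. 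I expect this to be the main difficulty: one must simultaneously, and uniformly in $p$, arrange Cohen--Macaulayness, normality, $F$-rationality over $K$, and the degeneration over $K(t_i^{1/p})$. I would first try a carefully rigged hypersurface (Gorenstein, so that Fedder-type criteria apply), then, if that proves impossible, a determinantal-type ring or the section ring $\bigoplus_m H^0(X,\mathcal{L}^{\otimes m})$ of a regular-but-not-smooth projective $K$-variety whose base change to $\bar K$ is non-reduced or non-normal; pinning down the right family is where the real ingenuity (and quite possibly explicit computation) will be needed.

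For the verifications: Cohen--Macaulayness should be read off the presentation (automatic for a hypersurface; for a section ring, via cohomological vanishings forcing the cone to be Cohen--Macaulay with negative $a$-invariant). Normality follows from Serre's criterion, with $R_1$ checked by the Jacobian criterion applied over the \emph{perfect} ground field $\FFp$ with the $t_i$ regarded as variables --- this is legitimate, whereas over $K$ the Jacobian criterion detects only $K$-smoothness, not regularity, and the number $n$ of parameters must be chosen so that the resulting singular locus has codimension at least two. $F$-rationality is then established either by showing $0^{*}_{H}=0$ directly, or, when $R$ is Gorenstein, via a Fedder-type computation together with the fact that Gorenstein $F$-rational ($F$-finite) rings are strongly $F$-regular; here Fedder's criterion is a real design constraint, since Gorenstein $F$-rational rings are $F$-pure and this already excludes the naive ``sum of $p$-th powers'' hypersurfaces. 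Finally, for $R\otimes_K K(t_i^{1/p})$ one exhibits the degeneration by hand --- a nonzero nilpotent, or an element in the tight closure of a parameter ideal but not in it, or a class in $H\otimes_K L$ killed by Frobenius --- any of which shows that it is not $F$-injective; transporting back along $\frakm$ as in the first paragraph produces the homomorphism demanded by the theorem. The same inseparable defect, seen on the multiplication map $R\otimes_K R\to R$, should moreover yield standard graded $F$-rational rings $R$ with $R\otimes_K R$ not $F$-rational.
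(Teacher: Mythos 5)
Your soft reduction matches the paper's own: given a standard graded $F$-rational $R$ over a field $K$ of characteristic $p$ such that $R\otimes_K L$ is not $F$-injective for some finite extension $L/K$, localize at the homogeneous maximal ideals to obtain $(R_\frakm,\frakm R_\frakm)\to (R_\frakm\otimes_K L, \frakn)$, a flat local map of excellent rings whose closed fiber is the field $L$. The observations that $K$ must be imperfect, that $R$ must be singular, and that Frobenius on $H\otimes_K L$ is $F_H\otimes F_L$ are all correct, and they frame the problem correctly.

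The gap is that the ring $R$ is never produced: the proposal stops exactly where the substance of the paper begins. Worse, your leading strategy is provably a dead end. You propose to take $R$ Gorenstein (``a carefully rigged hypersurface''), invoking Fedder-type criteria and the implication that Gorenstein $F$-rational rings are (strongly) $F$-regular. But strong $F$-regularity \emph{does} ascend along flat local maps with regular closed fiber, by Aberbach \cite{Aberbach}, so for any Gorenstein example the ring $R_\frakm\otimes_K L$ would again be $F$-regular, hence $F$-rational, and the phenomenon you want cannot occur. The paper makes exactly this point: the $R$ in Theorem~\ref{theorem:main:theorem} is \emph{necessarily} non-Gorenstein. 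Its actual construction is the $p$-th Veronese subring $R=S^{(p)}$ of a Gorenstein hypersurface, e.g.\ $S=K[x_0,\dots,x_p]/(x_0^p-t_1x_1^p-\cdots-t_px_p^p)$ over $K=\FFp(t_1,\dots,t_p)$. The hypersurface $S$ is deliberately \emph{not} $F$-injective (the class $[x_0/(x_1\cdots x_p)]$ dies under Frobenius), but the Veronese truncation retains only the degree $-p$ part of $H^{p}_\frakn(S)$, on which Frobenius is injective precisely because the monomials $t_1^{\alpha_1}\cdots t_p^{\alpha_p}$ are $K^p$-linearly independent; that independence collapses over $K^{1/p}$, which is where the kernel class in $H^{p}_\frakm(R\otimes_K K^{1/p})$ comes from. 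So the ``real ingenuity'' you flag is not optional, and the route you would have tried first is blocked in principle, not merely in practice.
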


Enescu had earlier demonstrated that $F$-injectivity does not satisfy $(\hyperref[ASC1]{\textup{ASC}_{\textup{I}}})$, though the examples~\cite[Page~3075]{Enescu:2009} are not normal; the question of whether normal $F$-injective rings satisfy $(\hyperref[ASC1]{\textup{ASC}_{\textup{I}}})$ has been raised earlier, e.g.,~\cite[Question 8.1]{Schwede:Zhang}, and is settled in the negative by Theorem~\ref{theorem:main:theorem}. There is a more recent notion, \emph{$F$-anti-nilpotence}, developed in the papers~\cite{Enescu:Hochster, Ma, Ma:Quy}; in view of the implications
\[
\text{$F$-rational \ $\implies$ \ $F$-anti-nilpotent \ $\implies$ $F$-injective},
\]
Theorem~\ref{theorem:main:theorem} also shows that $F$-anti-nilpotence does not satisfy $(\hyperref[ASC1]{\textup{ASC}_{\textup{I}}})$.

It is worth mentioning that the rings $R$ in Theorem~\ref{theorem:main:theorem} are necessarily not Gorenstein, since $F$-rational Gorenstein rings are $F$-regular by~\cite[Theorem~4.2]{HH:TAMS}, and $F$-regularity satisfies $(\hyperref[ASC1]{\textup{ASC}_{\textup{I}}})$ by~\cite[Theorem~3.6]{Aberbach}. Another subtlety is that such examples can only exist over imperfect fields, since $(\hyperref[ASC1]{\textup{ASC}_{\textup{I}}})$ and $(\hyperref[ASC2]{\textup{ASC}_{\textup{II}}})$ coincide when $R/\frakm$ is a perfect field, and $F$-rationality satisfies $(\hyperref[ASC2]{\textup{ASC}_{\textup{II}}})$.

Preliminary results are recorded in \S\ref{section:prelim}, including an extension of a criterion for $F$-rationality due to Fedder and Watanabe~\cite{Fedder:Watanabe}. In \S\ref{section:main} we construct two families of examples that each imply Theorem~\ref{theorem:main:theorem}: the first has the advantage that the proofs are more transparent, though the transcendence degree of the imperfect field over $\FFp$ increases with the characteristic~$p$; the second family accomplishes the desired with transcendence degree one, independent of the characteristic $p>0$, though the calculations are more involved. The examples in~\S\ref{section:main} are constructed as standard graded rings, with the relevant properties preserved under passing to localizations. In the process, we also obtain standard graded $F$-rational rings $R$, with the degree zero component being a field $K$ of positive characteristic, such that the enveloping algebra $R\otimes_K R$ is not $F$-rational.

\section{Preliminaries}
\label{section:prelim}

Following \cite[page~125]{Hochster:tc}, a local ring of positive prime characteristic is \emph{$F$-rational} if it is a homomorphic image of a Cohen-Macaulay ring, and each ideal generated by a system of parameters is tightly closed. It follows from this definition that an $F$-rational local ring is Cohen-Macaulay,~\cite[Theorem~4.2]{HH:TAMS}, so the notion coincides with that in~\S\ref{section:intro}. Moreover, an $F$-rational local ring is a normal domain. A localization of an $F$-rational local ring at a prime ideal is again $F$-rational; with this in mind, a noetherian ring of positive prime characteristic---which is not necessarily local---is \emph{$F$-rational} if its localization at each maximal ideal (equivalently, at each prime ideal) is~$F$-rational.

For the case of interest in this paper, let $R$ be an $\NN$-graded Cohen-Macaulay normal domain, such that the degree zero component is a field $K$ of characteristic $p>0$, and~$R$ is a finitely generated $K$-algebra. Then~$R$ is $F$-rational if and only if the ideal generated by some (equivalently, any) homogeneous system of parameters for~$R$ is tightly closed; see~\cite[Theorem~4.7]{HH:JAG} and the remark preceding it. An equivalent formulation in terms of local cohomology, following \cite[Proposition~3.3]{Smith:invent}, is described next:

Fix a homogeneous system of parameters $x_1,\dots,x_d$ for $R$, i.e., a sequence of~$d\colonequals\dim R$ homogeneous elements that generate an ideal with radical the homogeneous maximal ideal~$\frakm$ of $R$. The local cohomology module $H^d_\frakm(R)$ may then be computed using a \v Cech complex on $x_1,\dots,x_d$ as
\[
H^d_\frakm(R)\ =\ \frac{R_{x_1\cdots x_d}}{\sum\limits_i R_{x_1\cdots \hat{x_i}\cdots x_d}}.
\]
This module admits a natural $\ZZ$-grading where the cohomology class
\begin{equation}
\label{equation:prelim:eta}
\eta\colonequals\left[\frac{r}{x_1^k\cdots\,x_d^k}\right]\ \in\ H^d_\frakm(R),
\end{equation}
for $r\in R$ a homogeneous element, has
\[
\deg\eta \colonequals \deg r-k\sum_{i=1}^d\deg x_i.
\]
The Frobenius endomorphism $F\colon R\to R$ induces a map
\[
F\colon H^d_\frakm(R)\ \to\ H^d_{F(\frakm)}(R)=H^d_\frakm(R)
\]
that is the \emph{Frobenius action} on $H^d_\frakm(R)$; this is simply the map
\begin{equation}
\label{equation:prelim:action}
\eta=\left[\frac{r}{x_1^k\cdots\,x_d^k}\right]\ \mapsto\ F(\eta)=\left[\frac{r^p}{x_1^{kp}\cdots\,x_d^{kp}}\right].
\end{equation}
Since $R$ is Cohen-Macaulay by assumption, $R$ is $F$-injective precisely when the map~\eqref{equation:prelim:action} is injective.

The element $\eta$ as in~\eqref{equation:prelim:eta} belongs to $0^*_{H^d_\frakm(R)}$, the \emph{tight closure} of zero in $H^d_\frakm(R)$, if there exists a nonzero element $c\in R$ such that for all $e\in\NN$, one has
\[
cF^e(\eta)\ =\ 0
\]
in $H^d_\frakm(R)$. This translates as
\[
cr^{p^e}\ \in\ (x_1^{kp^e},\dots,\ x_d^{kp^e})R
\]
for all $e\in\NN$. In particular, $R$ is $F$-rational precisely when
\[
0^*_{H^d_\frakm(R)}\ =\ 0.
\]
It follows that an $F$-rational ring must be $F$-injective.

We next review Veronese subrings. Let $S$ be an $\NN$-graded ring for which the degree zero component is a field $K$, and~$S$ is a finitely generated $K$-algebra. Fix a positive integer $n$. Then the $n$-th \emph{Veronese subring} of $S$ is the ring
\[
S^{(n)}\colonequals\bigoplus_{k\in\NN}S_{nk}.
\]
Set $R\colonequals S^{(n)}$. The extension $R\subseteq S$ is split, so if $S$ is normal ring then so is $R$. Let $\frakm$ denote the homogeneous maximal ideal of~$R$, and note that $\frakm S$ is primary to the homogeneous maximal ideal~$\frakn$ of $S$. For all $i\le d \colonequals \dim S = \dim R$, it follows that $H^i_\frakm(R)$ is a direct summand of $H^i_{\frakm}(S)=H^i_{\frakn}(S)$, and hence that the ring $R$ is Cohen-Macaulay whenever $S$ is. Moreover, by ~\cite[Theorem~3.1.1]{Goto:Watanabe} one has
\[
H^d_\frakm(R)\ =\ \bigoplus_{k\in\ZZ}{[H^d_\frakn(S)]}_{nk}.
\]

Suppose $S \colonequals K[x_0,\dots, x_d]/(f)$, where $f$ is a homogeneous polynomial that is monic of degree $m$ with respect to the indeterminate $x_0$. Then $S$ is free over the polynomial subring $K[x_1,\dots,x_d]$, with basis $\{1,x_0,\dots,x_0^{m-1}\}$. The local cohomology module~$H^d_\frakn(S)$, as computed using a \v Cech complex on $x_1,\dots,x_d$, thus has a $K$-basis consisting of elements
\begin{equation}
\label{equation:basis}
\left[\frac{x_0^{\alpha_0}}{x_1^{\alpha_1+1}\ \cdots\ \,x_d^{\alpha_d+1}}\right]\ \in\ H^d_\frakn(S)
\end{equation}
where each $\alpha_i$ is a nonnegative integer, and $\alpha_0\le m-1$. When $S$ is graded, by restricting to elements of appropriate degree, one obtains a basis for a graded component of $H^d_\frakn(S)$, or for the local cohomology~$H^d_\frakm(R)$ of the Veronese subring $R$. Similarly, for the enveloping algebra $S\otimes_K S$, one has a $K$-basis as follows: use $y_0,\dots,y_d$ for the second copy of $S$, and consider the maximal ideal $\frakN\colonequals(x_0,\dots,x_d,\,y_0,\dots,y_d)$ of $S\otimes_K S$. Then the local cohomology module~$H^{2d}_\frakN(S\otimes_K S)$ has a $K$-basis
\begin{equation}
\label{equation:basis:enveloping}
\left[\frac{x_0^{\alpha_0}y_0^{\beta_0}}{x_1^{\alpha_1+1}\ \cdots\ \,x_d^{\alpha_d+1}y_1^{\beta_1+1}\ \cdots\ \,y_d^{\beta_d+1}}\right]
\end{equation}
where each $\alpha_i,\beta_j$ is a nonnegative integer, $\alpha_0\le m-1$, and $\beta_0\le m-1$.

The following is a variation of~\cite[Theorem~2.8]{Fedder:Watanabe} and~\cite[Theorem~7.12]{HH:JAG}, and is used in the proof of Theorem~\ref{theorem:small:transc}.

\begin{theorem}
\label{theorem:fedder:watanabe}
Let $S$ be an $\NN$-graded Cohen-Macaulay normal domain, such that the degree zero component is a field $K$ of positive characteristic, and~$S$ is a finitely generated~$K$-algebra. Let $\frakn$ denote the homogeneous maximal ideal of~$S$, and set $d\colonequals\dim S$.

Suppose each nonzero element of $\frakn$ has a power that is a test element, and that there exists an integer $n>0$ such that the Frobenius action on
\[
{[H^d_\frakn(S)]}_{\le -n}
\]
is injective. Then the tight closure of zero in $H^d_\frakn(S)$ is contained in ${[H^d_\frakn(S)]}_{>-n}$.
\end{theorem}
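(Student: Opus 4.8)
The plan is to argue directly with the graded submodule $0^*\colonequals 0^*_{H^d_\frakn(S)}$, exploiting that it is $F$-stable and that the test-element hypothesis forces it to have \emph{finite length}; the desired degree bound then drops out of a short degree count under Frobenius. First I would record the standard preliminary facts. The submodule $0^*$ is graded: a class $\big[r/(x_1^k\cdots x_d^k)\big]$ lies in $0^*$ precisely when $r$ lies in the tight closure of the parameter ideal $(x_1^k,\dots,x_d^k)$, and the tight closure of a homogeneous ideal is homogeneous. It is $F$-stable, since $cF^e(\eta)=0$ for all $e$ forces $cF^e(F\eta)=0$ for all $e$. And $H^d_\frakn(S)$ is an Artinian $S$-module, as $\frakn$ is a maximal ideal. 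Finally, every test element $c_0$ annihilates $0^*$: writing a class as $\big[r/(x_1^k\cdots x_d^k)\big]$ as above, the defining property of $c_0$ applied to the ideal $(x_1^k,\dots,x_d^k)$ at the exponent $e=0$ gives $c_0r\in(x_1^k,\dots,x_d^k)$, i.e.\ $c_0\big[r/(x_1^k\cdots x_d^k)\big]=0$.

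The key step is to deduce that $0^*$ has finite length. Choose homogeneous generators $y_1,\dots,y_m$ of $\frakn$; each $y_i$ is nonzero, so by hypothesis some power $y_i^{N_i}$ is a test element and hence lies in $\operatorname{Ann}_S 0^*$. Consequently $\frakn^t\cdot 0^*=0$ for all $t\gg 0$, so $0^*$ is an Artinian module over the Artinian ring $S/\frakn^t$ and therefore has finite length; in particular $[0^*]_j=0$ for all but finitely many $j\in\ZZ$.

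It remains to run the degree argument. Suppose toward a contradiction that $0^*\not\subseteq[H^d_\frakn(S)]_{>-n}$. Since $0^*$ is graded, there is a nonzero homogeneous $\eta\in[0^*]_\delta$ with $\delta\le -n$. As $n>0$ we have $\delta<0$, so $F^e(\eta)$ is homogeneous of degree $p^e\delta$, and the integers $p^e\delta$ ($e\ge 0$) are pairwise distinct and all $\le -n$. The Frobenius action multiplies degrees by $p$, so since $-n<0$ it maps $[H^d_\frakn(S)]_{\le -n}$ into itself; it is injective there by hypothesis, so $F^e(\eta)\ne 0$ for every $e\ge 0$, and as $0^*$ is $F$-stable and graded we get $0\ne F^e(\eta)\in[0^*]_{p^e\delta}$. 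Thus $[0^*]_j\ne 0$ for infinitely many $j$, contradicting the previous paragraph; hence $0^*\subseteq[H^d_\frakn(S)]_{>-n}$.

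The step I would be most careful about is the finite-length claim: one must invoke the test-element property for the module $H^d_\frakn(S)$ itself — through the identification of membership in $0^*$ with membership in the tight closures of parameter ideals recalled above — in order to conclude that $\operatorname{Ann}_S 0^*$ contains a power of $\frakn$. The remaining ingredients (gradedness and $F$-stability of $0^*$, Artinianness of $H^d_\frakn(S)$, and the behavior of degrees under the Frobenius action) are routine.
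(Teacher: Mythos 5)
Your proof is correct, but it follows a genuinely different route from the paper's argument, so a comparison is worthwhile. The paper works with a single offending class $\eta=\bigl[s/(x_1\cdots x_d)\bigr]$ of degree $\le -n$: using that the $x_i$ can be taken to be test elements, it shows $s^q\in(x_1^q,\dots,x_d^q):(x_1,\dots,x_d)=(x_1^q,\dots,x_d^q)+(x_1\cdots x_d)^{q-1}$ (the colon formula for a regular sequence), and then the injectivity hypothesis forces $s^q\notin(x_1^q,\dots,x_d^q)$, so $\deg s\ge\frac{q-1}{q}\deg(x_1\cdots x_d)$; letting $q\to\infty$ gives $\deg\eta\ge 0$, a contradiction. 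Your proof instead argues globally about the graded, $F$-stable submodule $0^*\subseteq H^d_\frakn(S)$: the test-element hypothesis makes $\frakn^t$ annihilate $0^*$ for $t\gg 0$, hence $0^*$ is a finite-length (equivalently, finite-dimensional) graded module, so it can have only finitely many nonzero graded components; but if a nonzero homogeneous $\eta\in 0^*$ had degree $\delta\le -n<0$, the injectivity hypothesis combined with $F$-stability would produce nonzero classes $F^e(\eta)\in[0^*]_{p^e\delta}$ in infinitely many distinct degrees. Each step you flagged as needing care — the identification of $0^*$ with tight closures of parameter-ideal membership (valid because $x_1,\dots,x_d$ is a regular sequence in the Cohen-Macaulay ring $S$), the gradedness of $0^*$ (from homogeneity of tight closure of homogeneous ideals), $F$-stability, and the finite-length deduction — is sound. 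Your route is arguably more conceptual and exposes the finiteness of $0^*_{H^d_\frakn(S)}$ as the driving mechanism, while the paper's computation is more self-contained and elementary, avoiding the finite-length claim in favor of an explicit degree estimate on a single element; both use the test-element and injectivity hypotheses in the same essential roles.
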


\begin{proof}
The hypotheses ensure that $S$ has a homogeneous system of parameters $x_1,\dots,x_d$ where each $x_i$ is a test element; we compute local cohomology using a \v Cech complex on such a homogeneous system of parameters. Suppose the assertion of the theorem is false; then there exists a nonzero homogeneous element $\eta$ in $0^*_{H^d_\frakn(S)}$ with~$\deg\eta\le -n$. After possibly replacing the $x_i$ by powers, we may assume that
\[
\eta\ =\ \left[\frac{s}{x_1\cdots\,x_d}\right],
\]
for $s$ a homogeneous element of $S$. Since each $x_i$ is a test element, one has
\[
x_is^q\ \in\ (x_1^q,\dots,x_d^q)
\]
for each $q=p^e$, and hence
\[
s^q\ \in\ (x_1^q,\dots,x_d^q):_R (x_1,\dots,x_d)\ =\ (x_1^q,\dots,x_d^q) + (x_1\cdots\,x_d)^{q-1},
\]
where the equality is because $x_1,\dots,x_d$ is a regular sequence. Since $F^e(\eta)$ is nonzero in view of the injectivity of the Frobenius action on ${[H^d_\frakn(S)]}_{\le -n}$, one has
\[
s^q\ \notin\ (x_1^q,\dots,x_d^q).
\]
This implies that $\deg s^q\ge \deg (x_1\cdots\,x_d)^{q-1}$ for each $q=p^e$, which translates as
\[
\deg s\ \ge\ \frac{q-1}{q}\deg (x_1\cdots\,x_d).
\]
Taking the limit $e\to\infty$ gives
\[
\deg s\ \ge\ \deg (x_1\cdots\,x_d),
\]
so $\deg\eta\ge 0$. This contradicts $\deg\eta\le -n<0$.
\end{proof}

A ring $S$ is \emph{standard graded} if it is $\NN$-graded, with the degree zero component being a field $K$, such that $S$ is generated as a $K$-algebra by finitely many elements of $S_1$.

While Theorem~\ref{theorem:fedder:watanabe} requires the injectivity of the Frobenius action on ${[H^d_\frakn(S)]}_{\le -n}$, additional hypotheses enable one to verify the injectivity of Frobenius on \emph{one} graded component; the following corollary will be used in the proof of Theorem~\ref{theorem:small:transc}. Following~\cite{Goto:Watanabe}, the \emph{$a$-invariant} of a Cohen-Macaulay graded ring $S$, as in Theorem~\ref{theorem:fedder:watanabe}, is
\[
a(S)\colonequals\max\left\{i\in\ZZ\ \big{|}\ {[H^d_\frakn(S)]}_i\neq 0\right\}.
\]

\begin{corollary}
\label{corollary:fedder:watanabe}
Let $S$ be a standard graded Gorenstein normal domain, of characteristic~$p>0$, such that the homogeneous maximal ideal $\frakn$ is an isolated singular point. Set~$d\colonequals\dim S$. Suppose $a(S)<0$, and that there exists an integer $n$ with $-n\le a(S)$ such that the Frobenius action
\[
F\colon {[H^d_\frakn(S)]}_{-n}\to {[H^d_\frakn(S)]}_{-np}
\]
is injective. Then the Veronese subring $S^{(n)}$ is $F$-rational.
\end{corollary}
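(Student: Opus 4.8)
The goal is to deduce $F$-rationality of the Veronese $R \colonequals S^{(n)}$ from Theorem~\ref{theorem:fedder:watanabe}, so the plan is to verify the two hypotheses of that theorem for $S$ with the given $n$. First I would address the test element hypothesis: since $S$ is an excellent (standard graded, hence finitely generated over a field) reduced ring whose singular locus is the single point $\frakn$, a result of Hochster and Huneke guarantees that for each $c$ not in any minimal prime and vanishing on the singular locus, some power $c^{p^e}$ is a test element; because $S$ is a domain with isolated singularity at $\frakn$, every nonzero element of $\frakn$ lies in the radical of the defining ideal of the singular locus, so each nonzero element of $\frakn$ has a power that is a test element. This matches the first hypothesis of Theorem~\ref{theorem:fedder:watanabe} verbatim.

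Next comes the injectivity of Frobenius on ${[H^d_\frakn(S)]}_{\le -n}$. Here is where the Gorenstein and $a$-invariant hypotheses do the work. Since $S$ is Cohen-Macaulay with $a(S) < 0$, the module $H^d_\frakn(S)$ vanishes in all degrees $> a(S)$, and in particular in all degrees $\ge 0$; so ${[H^d_\frakn(S)]}_{\le -n}$ is the same as ${[H^d_\frakn(S)]}_{-n\le \,\cdot\, \le a(S)}$ together with everything in degree $< -np$ — but the key point is that the Frobenius action raises degrees by a factor of $p$, i.e.\ it sends ${[H^d_\frakn(S)]}_i$ into ${[H^d_\frakn(S)]}_{ip}$. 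The plan is to show that injectivity of $F$ on the single component ${[H^d_\frakn(S)]}_{-n}$ propagates to injectivity on all of ${[H^d_\frakn(S)]}_{\le -n}$. Because $S$ is Gorenstein, $H^d_\frakn(S)$ is, up to a shift, the graded Matlis dual of $S$ itself, so it is generated over $S$ by its top-degree (i.e.\ degree $a(S)$) component, which is one-dimensional; more usefully, since $S$ is standard graded, multiplication by $S_1$ gives surjections ${[H^d_\frakn(S)]}_{i} \twoheadleftarrow S_1 \otimes {[H^d_\frakn(S)]}_{i+1}$ going \emph{down} in degree — equivalently $H^d_\frakn(S)$ is generated in its top degrees, so every element of degree $\le -n$ is an $S$-combination of elements of degree $-n$ (using that $-n \le a(S)$ ensures we can climb up to degree $-n$). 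Then for $\eta$ homogeneous of degree $j \le -n$, write $\eta = \sum s_k \eta_k$ with $\eta_k$ of degree $-n$ and $s_k \in S$ homogeneous; if $F(\eta) = 0$ then $\sum s_k^p F(\eta_k) = 0$, and one leverages that the $F(\eta_k)$ sit in the single component ${[H^d_\frakn(S)]}_{-np}$ on which $F$ is injective to force $\eta = 0$. Honestly this reduction step is the part I expect to require the most care — the naive argument needs $F$ to be not just injective on one component but compatible with the $S$-module structure in a way that lets one descend, and the clean way to see it is via the isomorphism $F_\ast{[H^d_\frakn(S)]} \cong \bigoplus {[H^d_\frakn(S)]}$ coming from the $S^p$-module decomposition, or equivalently via the standard fact that for standard graded $S$ the injectivity of $F$ on ${[H^d_\frakn(S)]}_{a(S)}$ (or any fixed component sufficiently low) implies $F$-injectivity of $S$ after the Veronese — this is precisely the content one extracts from Fedder--Watanabe.

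Once both hypotheses of Theorem~\ref{theorem:fedder:watanabe} are in hand, it yields $0^*_{H^d_\frakn(S)} \subseteq {[H^d_\frakn(S)]}_{>-n}$. Now I would pass to the Veronese: as recorded in the preliminaries, $H^d_\frakm(R) = \bigoplus_{k\in\ZZ}{[H^d_\frakn(S)]}_{nk}$ is a graded direct summand of $H^d_\frakn(S)$ as modules with Frobenius action, so $0^*_{H^d_\frakm(R)}$ is the intersection of $0^*_{H^d_\frakn(S)}$ with $H^d_\frakm(R)$, hence contained in $\bigoplus_{nk > -n}{[H^d_\frakn(S)]}_{nk} = \bigoplus_{k \ge 0}{[H^d_\frakn(S)]}_{nk}$. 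But $a(S) < 0$ forces ${[H^d_\frakn(S)]}_{nk} = 0$ for all $k \ge 0$, so $0^*_{H^d_\frakm(R)} = 0$. Since $R$ is a standard graded (after re-grading the Veronese) Cohen-Macaulay normal domain with $[R]_0 = K$ a field, vanishing of the tight closure of zero in top local cohomology is exactly $F$-rationality by the criterion recalled at the start of \S\ref{section:prelim}, completing the proof.
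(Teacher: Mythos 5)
Your overall strategy matches the paper's: verify the test element hypothesis via the isolated singularity, establish injectivity of Frobenius on ${[H^d_\frakn(S)]}_{\le -n}$ from injectivity on the single component of degree $-n$, invoke Theorem~\ref{theorem:fedder:watanabe}, and pass to the Veronese using $-n \le a(S) < 0$. The first and last steps are fine. However, the middle step — propagating injectivity — is stated backwards in a way that would not hold up.

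You propose to write a homogeneous $\eta$ of degree $j \le -n$ as an $S$-linear combination $\eta = \sum s_k\eta_k$ of classes $\eta_k$ in degree $-n$. This cannot work: $S$ is $\NN$-graded, so the $S$-module action on $H^d_\frakn(S)$ \emph{raises} degree, i.e., multiplication by $S_1$ maps ${[H^d_\frakn(S)]}_i$ to ${[H^d_\frakn(S)]}_{i+1}$. Consequently the $S$-submodule generated by ${[H^d_\frakn(S)]}_{-n}$ sits entirely in degrees $\ge -n$, and in particular is finite-dimensional over $K$, whereas ${[H^d_\frakn(S)]}_{\le -n}$ is infinite-dimensional once $d\ge 1$. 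The slogan ``$H^d_\frakn(S)$ is generated in its top degrees'' is false; as a graded $S$-module it is cogenerated (not generated) by its socle ${[H^d_\frakn(S)]}_{a(S)}$. You flagged this as the step requiring care, and indeed the concrete reduction you describe would fail.

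The correct direction is the opposite one, and is what makes the Gorenstein and standard-graded hypotheses do their work: given a nonzero homogeneous $\eta$ with $\deg\eta \le -n$, the Gorenstein property produces a homogeneous $s\in S$ with $s\eta$ a nonzero element of the one-dimensional socle ${[H^d_\frakn(S)]}_{a(S)}$; since $S$ is standard graded, $s$ may be taken to be a product of degree-one elements, so a suitable partial product $s'$ yields $s'\eta$ nonzero of degree exactly $-n$ (this uses $\deg\eta\le -n\le a(S)$). Then $F(s'\eta) = (s')^pF(\eta)$ is nonzero by the hypothesis, forcing $F(\eta)\ne 0$. With that substitution, the rest of your proposal — applying Theorem~\ref{theorem:fedder:watanabe}, observing that $H^d_\frakm(R) \subseteq {[H^d_\frakn(S)]}_{\le -n}$ because $a(S)<0$, and noting that the tight closure of zero in $H^d_\frakm(R)$ is contained in that of $H^d_\frakn(S)$ — goes through as in the paper.
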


\begin{proof}
Because $\frakn$ is an isolated singular point, each nonzero element of $\frakn$ has a power that is a test element, and Theorem~\ref{theorem:fedder:watanabe} is applicable. Since $S$ is Gorenstein, each nonzero homogeneous element $\eta$ of~${[H^d_\frakn(S)]}_{\le -n}$ has a nonzero multiple $s\eta$ in the socle of~$H^d_\frakn(S)$, which is the graded component~${[H^d_\frakn(S)]}_{a(S)}$. As~$S$ is standard graded, such a multiplier~$s\in S$ can be chosen to be a product of elements of degree one, therefore $\eta$ has a nonzero multiple~$s'\eta$ in ${[H^d_\frakn(S)]}_{-n}$. Since $F(s'\eta)$ is nonzero, so is $F(\eta)$. It follows that the Frobenius action on ${[H^d_\frakn(S)]}_{\le -n}$ is injective, so Theorem~\ref{theorem:fedder:watanabe} implies that the tight closure of zero in~$H^d_\frakn(S)$ is contained in~${[H^d_\frakn(S)]}_{>-n}$.

Set $R\colonequals S^{(n)}$. The hypotheses $-n\le a(S)<0$ give
\[
H^d_\frakm(R)\ \subseteq\ {[H^d_\frakn(S)]}_{\le -n}
\]
where $\frakm$ is the homogeneous maximal ideal of~$R$. As the tight closure of zero in~$H^d_\frakm(R)$ is contained in the tight closure of zero in $H^d_\frakn(S)$, the assertion follows.
\end{proof}

\section{The examples}
\label{section:main}

\begin{theorem}
\label{theorem:trans:p}
Fix a prime integer $p>0$. Let $t_1,\dots,t_p$ be indeterminates over the field $\FFp$ and set $K\colonequals\FFp(t_1,\dots,t_p)$. Consider the hypersurface
\[
S\colonequals K[x_0,\dots,x_p]/(x_0^p - t^{\phantom{p}}_1\!x_1^p - \dots - t^{\phantom{p}}_p x_p^p)
\]
with the standard $\NN$-grading, and its $p$-th Veronese subring $R\colonequals S^{(p)}$. Then:
\begin{enumerate}[\quad\rm(1)]
\item\label{trans:p:1} The ring $R$ is $F$-rational.
\item\label{trans:p:2} The rings $R\otimes_K K^{1/p}$ and $R\otimes_K\bar{K}$ are not $F$-injective, hence not $F$-rational.
\item\label{trans:p:3} The enveloping algebra $R\otimes_K R$ is not $F$-injective, hence not $F$-rational.
\end{enumerate}
\end{theorem}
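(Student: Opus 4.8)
The plan is to obtain all three assertions from explicit computations in the graded local cohomology of $S$ and of $S\otimes_K S$, using the \v Cech-complex bases \eqref{equation:basis} and \eqref{equation:basis:enveloping}, the description \eqref{equation:prelim:action} of the Frobenius action (crucially, that it is $p$-semilinear), and the Veronese identity $H^d_\frakm(S^{(n)})=\bigoplus_{k}{[H^d_\frakn(S)]}_{nk}$. For part~\ref{trans:p:1} I would first check that $S$ satisfies the hypotheses of Corollary~\ref{corollary:fedder:watanabe}: writing $f\colonequals x_0^p-t_1x_1^p-\dots-t_px_p^p$, the ring $S$ is a standard graded hypersurface, hence Gorenstein and Cohen--Macaulay with $a(S)=p-(p+1)=-1<0$; and $f$ is irreducible over $K$ since $t_1x_1^p+\dots+t_px_p^p$ is not a $p$-th power in $K[x_1,\dots,x_p]$ (here one uses that $t_1,\dots,t_p$ are algebraically independent over $\FFp$), so $S$ is a domain. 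The only genuinely delicate point is that $\frakn$ is an isolated singular point of $S$, whence $S$ is normal and every nonzero element of $\frakn$ has a power that is a test element: the naive Jacobian criterion is vacuous over the imperfect field $K$ (all partials of $f$ vanish identically), so instead I would localize at each $x_i$, substitute $u_j\colonequals x_j/x_i$, and reduce to the regularity of a hypersurface of the shape $K[v,\,u_j\,(j\ne i)]/\bigl(v^p-t_i-\sum_{j\ne i}t_ju_j^p\bigr)$, which again follows from the $p$-independence of the $t_i$.

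Granting this, Corollary~\ref{corollary:fedder:watanabe} applied with $n=p$ reduces $F$-rationality of $R=S^{(p)}$ to injectivity of
\[
F\colon {[H^p_\frakn(S)]}_{-p}\ \to\ {[H^p_\frakn(S)]}_{-p^2}.
\]
By \eqref{equation:basis}, the source has $K$-basis $\{e_\balpha\}$ indexed by $\balpha=(\alpha_1,\dots,\alpha_p)\in\NN^p$ with $|\balpha|\le p-1$, where $e_\balpha\colonequals\bigl[x_0^{|\balpha|}/(x_1^{\alpha_1+1}\cdots x_p^{\alpha_p+1})\bigr]$. Substituting $x_0^p=t_1x_1^p+\dots+t_px_p^p$ and expanding by the multinomial theorem in characteristic $p$, all but one \v Cech fraction vanishes and one gets
\[
F(e_\balpha)\ =\ \binom{|\balpha|}{\balpha}\,t_1^{\alpha_1}\cdots t_p^{\alpha_p}\left[\frac{1}{x_1^p\cdots x_p^p}\right],
\]
with a multinomial coefficient, so the images all lie on the single line spanned by $[1/(x_1^p\cdots x_p^p)]$. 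Since $F$ is $p$-semilinear, a general element $\sum_\balpha c_\balpha e_\balpha$ maps to $\bigl(\sum_\balpha c_\balpha^{\,p}\binom{|\balpha|}{\balpha}t_1^{\alpha_1}\cdots t_p^{\alpha_p}\bigr)[1/(x_1^p\cdots x_p^p)]$. Now $\binom{|\balpha|}{\balpha}\not\equiv 0\pmod p$ because $|\balpha|<p$ forces no carrying, and the monomials $t_1^{\alpha_1}\cdots t_p^{\alpha_p}$ with $|\balpha|\le p-1$ are among the standard $K^p$-basis $\{t_1^{\gamma_1}\cdots t_p^{\gamma_p}:0\le\gamma_i\le p-1\}$ of $K$ over $K^p$, hence $K^p$-linearly independent; so each $c_\balpha^{\,p}=0$, i.e.\ $c_\balpha=0$, and the map is injective. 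Thus $R$ is $F$-rational.

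For part~\ref{trans:p:2}, over $L\in\{K^{1/p},\bar K\}$ one has $f=(x_0-t_1^{1/p}x_1-\dots-t_p^{1/p}x_p)^p=\ell^p$ for a linear form $\ell$, and the invertible linear substitution sending $x_0$ to $\ell$ identifies $S\otimes_KL$ with the non-reduced ring $L[\ell,x_1,\dots,x_p]/(\ell^p)$; since the Veronese commutes with base change, $R\otimes_KL=\bigl(L[\ell,x_1,\dots,x_p]/(\ell^p)\bigr)^{(p)}$. Computing with the \v Cech complex on $x_1,\dots,x_p$, the class $\bigl[\ell/(x_1^2x_2\cdots x_p)\bigr]$ is a nonzero element of $H^p$ of degree $1-(p+1)=-p$, so it lies in $H^p_\frakm(R\otimes_KL)$, and it is killed by $F$ because $\ell^p=0$; hence $R\otimes_KL$ is not $F$-injective, a fortiori not $F$-rational. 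For part~\ref{trans:p:3}, the K\"unneth formula gives $H^{2p}_{\frakM}(R\otimes_KR)\cong H^p_\frakm(R)\otimes_KH^p_\frakm(R)$ with Frobenius acting as $F\otimes F$. From the computation above, inside ${[H^p_\frakm(R)]}_{-1}={[H^p_\frakn(S)]}_{-p}$ one has $F(e_{(1,0,\dots,0)})=t_1\,F(e_{(0,\dots,0)})$ with $t_1\notin K^p$; in particular $e_{(1,0,\dots,0)}$ and $e_{(0,\dots,0)}$ are $K$-linearly independent, so $e_{(1,0,\dots,0)}\otimes e_{(0,\dots,0)}-e_{(0,\dots,0)}\otimes e_{(1,0,\dots,0)}$ is a nonzero element of $H^{2p}_{\frakM}(R\otimes_KR)$, while $F\otimes F$ sends it to $F(e_{(1,0,\dots,0)})\otimes F(e_{(0,\dots,0)})-F(e_{(0,\dots,0)})\otimes F(e_{(1,0,\dots,0)})=0$, the scalar $t_1$ passing freely across $\otimes_K$. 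So $R\otimes_KR$ is not $F$-injective.

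The main obstacle is the Frobenius computation underlying part~\ref{trans:p:1}: one must identify $F$ explicitly on ${[H^p_\frakn(S)]}_{-p}$ via the multinomial expansion of $(x_0^p)^{|\balpha|}$, and then combine $p$-semilinearity with the $p$-independence of $t_1,\dots,t_p$ to force injectivity --- this is precisely where the imperfection of $K$ is put to use, and it is also the ingredient (re-read through the K\"unneth formula) that makes parts~\ref{trans:p:2} and \ref{trans:p:3} fail. A lesser, more technical hurdle is verifying that $\frakn$ is an isolated singular point of $S$, since this cannot be read off the identically-vanishing Jacobian and must be done by hand after localizing.
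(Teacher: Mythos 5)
Your proposal is correct and follows essentially the same route as the paper: Corollary~\ref{corollary:fedder:watanabe} applied to the degree $-p$ socle piece, the multinomial/pigeonhole computation of $F(\eta_\balpha)$ producing a multinomial coefficient times $t_1^{\alpha_1}\cdots t_p^{\alpha_p}\cdot[1/(x_1^p\cdots x_p^p)]$, and then $p$-independence of $t_1,\dots,t_p$ over $K^p$ to force injectivity. The only real departures are cosmetic: the paper verifies normality and the isolated singularity by running the Jacobian criterion on the auxiliary hypersurface $A$ over $\FFp$ (where $t_1,\dots,t_p$ are indeterminates, so partials do not vanish) and then passing to $S$ by localization, whereas you localize $S$ directly at each $x_i$ and dehomogenize --- both valid, though yours requires the extra observation you give about the localized hypersurface being a localization of a polynomial ring. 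For~\eqref{trans:p:2} your factorization $f=\ell^p$ is precisely the non-reducedness observation the paper relegates to a remark after the proof; your class $[\ell/(x_1^2x_2\cdots x_p)]$ differs from the paper's explicit one but serves the same purpose. For~\eqref{trans:p:3} the K\"unneth framing is a clean conceptual repackaging of what the paper does; unraveling $e_{(1,0,\dots,0)}\otimes e_{(0,\dots,0)}-e_{(0,\dots,0)}\otimes e_{(1,0,\dots,0)}$ via the tensor of \v Cech complexes gives exactly the paper's class $[(x_0y_1-x_1y_0)/(x_1^2x_2\cdots x_p\,y_1^2y_2\cdots y_p)]$.
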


\begin{proof}
First consider the hypersurface
\[
A\colonequals\FFp[t_1,\dots,t_p,\,x_0,\dots,x_p]/(x_0^p - t^{\phantom{p}}_1\!x_1^p - \dots - t^{\phantom{p}}_p x_p^p).
\]
The Jacobian criterion shows $A_{x_i}$ is regular for each $i$, so $A$ is normal by Serre's criterion. By inverting an appropriate multiplicative set in $A$, one obtains the ring $S$, which therefore is also normal. Since $R$ is a pure subring of the finite extension ring $S$, it follows that $R$ is normal and Cohen-Macaulay.

Note that $S$ is not $F$-injective: set $\frakn$ to be the homogeneous maximal ideal of $S$; computing local cohomology $H^p_\frakn(S)$ using a \v Cech complex on the system of parameters $x_1,\dots,x_p$ for $S$, the cohomology class
\[
\left[\frac{x_0}{x_1\cdots\,x_p}\right]\ \in\ H^p_\frakn(S)
\]
maps to zero under the Frobenius action on $H^p_\frakn(S)$. We shall see that the Frobenius action on $H^p_\frakm(R)$, with $\frakm$ the homogeneous maximal ideal of $R$, is however injective.

First note that ${[H^p_\frakm(R)]}_{-p}$ is the socle of $H^p_\frakm(R)$: it is the highest degree component, and any nonzero homogeneous element $\eta\in H^p_\frakm(R)$ has a nonzero multiple $s\eta$ in the socle of~$H^p_\frakn(S)$, which is ${[H^p_\frakn(S)]}_{-1}$; but then it has a nonzero multiple $s'\eta$ in
\[
{[H^p_\frakn(S)]}_{-p}={[H^p_\frakm(R)]}_{-p},
\]
for $s,s'$ homogeneous in $S$, in which case degree considerations imply that $s'\in R$.

To verify that the Frobenius action $F$ on $H^p_\frakm(R)$ is injective, it suffices to prove the injectivity of $F$ on the socle ${[H^p_\frakm(R)]}_{-p}$ which, following~\eqref{equation:basis}, is the~$K$-vector space spanned by the cohomology classes
\[
\eta_{\balpha}\colonequals\left[\frac{x_0^{\alpha_1+\dots+\alpha_p}}{x_1^{\alpha_1+1}\ \cdots\ \,x_p^{\alpha_p+1}}\right]\ \in\ {[H^p_\frakm(R)]}_{-p},
\]
where each $\alpha_i$ is a nonnegative integer, $\sum\alpha_i\le p-1$, and $\balpha\colonequals(\alpha_1,\dots,\alpha_p)$. Since
\[
x_0^p\ =\ t^{\phantom{p}}_1\!x_1^p + \dots + t^{\phantom{p}}_p x_p^p
\]
in the ring $S$, one has
\begin{equation}
\label{equation:F:eta}
F(\eta_{\balpha})
\ =\ 
\left[\frac{{(t^{\phantom{p}}_1\!x_1^p + \dots + t^{\phantom{p}}_p x_p^p)}^{\sum\alpha_i}}{x_1^{p\alpha_1+p}\ \cdots\ \,x_p^{p\alpha_p+p}}\right]
\ =\
\frac{(\sum\alpha_i)!}{\alpha_1!\cdots\alpha_p!}
\left[\frac{t_1^{\alpha_1}\cdots\ t_p^{\alpha_p}}{x_1^p\ \cdots\ \,x_p^p}\right],
\end{equation}
where the latter equality uses the pigeonhole principle. The elements $t_1^{\alpha_1}\cdots\ t_p^{\alpha_p}$ of~$K$, as~$\balpha$ varies subject to the conditions above, are linearly independent over the subfield~$K^p$. It follows that for any nonzero $K$-linear combination $\eta$ of the elements $\eta_{\balpha}$, one has $F(\eta)\neq 0$. This proves that the ring $R$ is $F$-injective.

One may now use Corollary~\ref{corollary:fedder:watanabe} to conclude that $R$ is $F$-rational; alternatively, one can also argue as follows: Equation~\eqref{equation:F:eta} shows that the image of ${[H^p_\frakm(R)]}_{-p}$ under $F$ lies in the $K$-span of the cohomology class
\[
\mu\colonequals\left[\frac{1}{x_1^p\cdots\,x_p^p}\right], 
\]
so it suffices to verify that $\mu$ does not belong to the tight closure of zero in $H^p_\frakm(R)$. This holds since no nonzero homogeneous form in $R$ annihilates
\[
F^e(\mu)\ =\ \left[\frac{1}{x_1^{p^{e+1}}\cdots\ \,x_p^{p^{e+1}}}\right]
\]
for each $e\ge 0$.

For~\eqref{trans:p:2}, let $\bar{R}$ denote either of $R\otimes_K K^{1/p}$ or $R\otimes_K\bar{K}$. Note that
\[
t_2^{1/p}\left[\frac{x_0}{x_1^2x_2\cdots x_p}\right]-t_1^{1/p}\left[\frac{x_0}{x_1x_2^2x_3\cdots x_p}\right]
\]
is a nonzero element of $H^p_\frakm(\bar{R})$, since it is a nontrivial linear combination of basis elements as in~\eqref{equation:basis}. However its image under the Frobenius action is
\[
t_2\left[\frac{t^{\phantom{p}}_1\!x_1^p + \dots + t^{\phantom{p}}_p x_p^p}{x_1^{2p}x_2^p\cdots x_p^p}\right] - 
t_1\left[\frac{t^{\phantom{p}}_1\!x_1^p + \dots + t^{\phantom{p}}_p x_p^p}{x_1^px_2^{2p}x_3^p\cdots x_p^p}\right]
\ = \ 
t_2\left[\frac{t_1}{x_1^px_2^p\cdots x_p^p}\right] - t_1\left[\frac{t_2}{x_1^px_2^p\cdots x_p^p}\right]
\]
which, of course, is zero.

Lastly, for~\eqref{trans:p:3}, write the enveloping algebra $S\otimes_K S$ of $S$ as
\[
K[x_0,\dots,x_p,\,y_0,\dots,y_p]/(x_0^p - t^{\phantom{p}}_1\!x_1^p - \dots - t^{\phantom{p}}_p x_p^p,\ 
y_0^p - t^{\phantom{p}}_1\!y_1^p - \dots - t^{\phantom{p}}_p y_p^p),
\]
with the $\NN^2$-grading under which $\deg x_i=(1,0)$ and $\deg y_i=(0,1)$ for each $i$. Then
\[
R\otimes_K R\ =\ \bigoplus_{k,l\in\NN}{[S\otimes_K S]}_{(pk,pl)}.
\]
Note that $R\otimes_K R$ admits a standard grading; let $\frakM$ denote its homogeneous maximal ideal. Then $\frakM(S\otimes_K S)$ is primary to the homogeneous maximal ideal $\frakN\colonequals(x_0,\dots,x_p,\,y_0,\dots,y_p)$ of $S\otimes_K S$, and 
\[
H^{2p}_\frakM(R\otimes_K R)\ =\ \bigoplus_{k,l\in\NN}{[H^{2p}_\frakN(S\otimes_K S)]}_{(pk,pl)}.
\]
The cohomology class
\[
\left[\frac{x_0y_1-x_1y_0}{x_1^2x_2\cdots x_p\,y_1^2y_2\cdots y_p}\right]\ \in\ H^{2p}_\frakM(R\otimes_K R)
\]
is nonzero since it is a nontrivial linear combination of basis elements as in~\eqref{equation:basis:enveloping}; however, it is readily seen to be in the kernel of the Frobenius action.
\end{proof}

Note that $R\otimes_K K^{1/p}$ and $R\otimes_K\bar{K}$ in the previous theorem are not reduced: for example,
\[
(x_0 - t_1^{1/p}x_1 - \dots - t_p^{1/p} x_p)\, x_1\cdots\, x_{p-1}
\]
is a nonzero nilpotent element. This gives an alternative proof of~\eqref{trans:p:2}, since $F$-injective rings are reduced by~\cite[Remark~2.6]{Schwede:Zhang}.

In the examples provided by Theorem~\ref{theorem:trans:p}, the transcendence degree of~$K$ over~$\FFp$ increases with $p$; for the interested reader, the following theorem gets around this, though the proof is perhaps more technical:

\begin{theorem}
\label{theorem:small:transc}
Fix a prime integer $p>0$. Let $t$ be an indeterminate over the field $\FFp$ and set $K\colonequals\FFp(t)$. Consider the hypersurface
\[
S\colonequals K[w,x,y,\,z_1,\dots,z_{p-1}]/(w^{p+1}-tx^{p+1}-xy^p-\sum_{i=1}^{p-1}z_i^{p+1})
\]
with the standard $\NN$-grading, and set $R\colonequals S^{(p)}$. Then:
\begin{enumerate}[\quad\rm(1)]
\item\label{small:transc:1} The ring $R$ is $F$-rational.
\item\label{small:transc:2} The rings $R \otimes_K K^{1/p}$ and $R \otimes_K \overline K$ are not $F$-injective, hence not $F$-rational.
\item\label{small:transc:3} The enveloping algebra $R\otimes_K R$ is not $F$-injective, hence not $F$-rational.
\end{enumerate}
\end{theorem}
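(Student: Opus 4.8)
The plan is to mirror the structure of the proof of Theorem~\ref{theorem:trans:p}, adapting each step to the new hypersurface $S = K[w,x,y,z_1,\dots,z_{p-1}]/(w^{p+1}-tx^{p+1}-xy^p-\sum z_i^{p+1})$. First I would establish the basic ring-theoretic properties: by clearing denominators one works with $A = \FFp[t,w,x,y,z_1,\dots,z_{p-1}]/(w^{p+1}-tx^{p+1}-xy^p-\sum z_i^{p+1})$ and checks via the Jacobian criterion that the singular locus has codimension at least two, so $A$ (and hence $S$) is normal by Serre's criterion, and is a hypersurface so Cohen--Macaulay; purity of $R = S^{(p)} \subseteq S$ then transfers normality and Cohen--Macaulayness to $R$. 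Since $S$ is a hypersurface it is Gorenstein; one computes $a(S) = (p+1) - (p+2) = -1 < 0$ where $d = \dim S = p+1$ (here $w$ has degree one and the defining form has degree $p+1$), and one checks that $\frakn$ is an isolated singularity so that Corollary~\ref{corollary:fedder:watanabe} applies with $n = p$. Thus $R$ is $F$-rational once we verify that $F\colon [H^d_\frakn(S)]_{-p} \to [H^d_\frakn(S)]_{-p^2}$ is injective.

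For that injectivity, I would pick the \v Cech complex on the system of parameters $x, z_1, \dots, z_{p-1}, y$ (note $w$ is integral of degree $p+1$ over the remaining variables, so $S$ is free over $K[x,y,z_1,\dots,z_{p-1}]$ with basis $1, w, \dots, w^p$), and write out the basis~\eqref{equation:basis} for the degree $-p$ component of $H^d_\frakn(S)$: cohomology classes with numerator $w^{a}$ and denominators $x^{b+1}y^{c+1}\prod z_i^{d_i+1}$ subject to the degree constraint $a = b + c + \sum d_i - (d-1)$ forcing total numerator-degree $a \le p$. Applying Frobenius replaces $w^{ap}$ by a power of $tx^{p+1} + xy^p + \sum z_i^{p+1}$; expanding this multinomially and keeping only the monomials that land in the socle via the pigeonhole principle (as in~\eqref{equation:F:eta}), each surviving term acquires a coefficient of the form $t^{\,j}$ times a rational number, and the resulting $K$-linear map must be shown injective by exhibiting that the relevant coefficient vectors are linearly independent over $K^p$ --- the key point being that the mixed term $xy^p$ is what allows the needed powers of $t$ to separate the basis elements. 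I expect this multinomial-expansion-and-independence argument to be the main obstacle, since unlike the diagonal form $\sum t_i x_i^p$ of Theorem~\ref{theorem:trans:p} the single parameter $t$ must do all the work, and one has to track carefully which combinations of exponents survive the pigeonhole reduction and confirm the binomial/multinomial coefficients are nonzero mod $p$.

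For part~\eqref{small:transc:2}, I would exhibit an explicit nonzero class in $H^d_\frakm(\bar R)$ --- a $K^{1/p}$-linear combination of two basis elements analogous to $t_2^{1/p}[\cdots] - t_1^{1/p}[\cdots]$ but built from $t^{1/p}$ and the monomial structure of $tx^{p+1}+xy^p$ --- whose image under Frobenius is forced to be zero because after raising to the $p$-th power the two terms become equal. Concretely, since $w^{p+1} = tx^{p+1} + xy^p + \sum z_i^{p+1}$, a class like $t^{1/p}[w^{p}/(x^{p+1}y\,z_1\cdots z_{p-1})] - [w^{p}/(x\,y^{p+1}z_1\cdots z_{p-1})]$ (or a suitable variant with correct degrees) should be nonzero but Frobenius-killed. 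Part~\eqref{small:transc:3} follows the same template as in Theorem~\ref{theorem:trans:p}: one writes the enveloping algebra $S\otimes_K S$ with variables $w,x,y,z_i$ and $w',x',y',z'_i$, observes $R\otimes_K R$ is standard graded with $H^{2d}_\frakM(R\otimes_K R)$ a sum of bigraded pieces $[H^{2d}_\frakN(S\otimes_K S)]_{(pk,pl)}$, and produces a nonzero cohomology class built from a ``difference'' like $w^{?}(x')^{?} - x^{?}(w')^{?}$ in the numerator --- exploiting that $t$ is the same in both copies --- that is visibly in the kernel of Frobenius after expanding both relations. The main subtlety here, as before, will be bookkeeping the bidegrees to land inside the Veronese and confirming the class is a nontrivial combination of the basis~\eqref{equation:basis:enveloping}.
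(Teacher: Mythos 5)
Your proposal correctly mirrors the overall architecture of the paper's proof: normality via Jacobian plus Serre, reduction to the single Frobenius map $F\colon [H^{p+1}_\frakn(S)]_{-p}\to [H^{p+1}_\frakn(S)]_{-p^2}$ via Corollary~\ref{corollary:fedder:watanabe}, the monomial basis~\eqref{equation:basis} over $K[x,y,z_1,\dots,z_{p-1}]$, and explicit kernel classes for parts~(2) and~(3). But for part~(1), the heart of the theorem, you correctly flag ``the multinomial-expansion-and-independence argument'' as the main obstacle and then leave it unresolved. The missing idea is the paper's auxiliary $(\ZZ/(p+1))^{p+1}$-multigrading on $S$ given by $\deg z_i=e_i$, $\deg w=e_p$, $\deg x=e_{p+1}=\deg y$. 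Since $\gcd(p,p+1)=1$, Frobenius sends distinct multigraded components to distinct multigraded components, so injectivity can be checked one small block at a time. Within a fixed block, the basis vectors are exactly $\eta_{\alpha,\beta,\bgamma}$ with $\bgamma$ and $k:=\alpha+\beta$ \emph{fixed}, only $\alpha$ varying; after isolating the coefficient of the distinguished monomial $\prod z_i^{(p+1)\gamma_i}$ and then the coefficient of $x^{kp+k}y^{kp}$, the condition for a kernel element collapses to $\sum_{\alpha+\beta=k}\binom{k}{\alpha}c_\alpha^p t^\alpha=0$, and since $k\le p-1<[K^p(t):K^p]=p$ this forces all $c_\alpha=0$. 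Without this decoupling, the map from the full $(-p)$-graded piece to the much larger $(-p^2)$-graded piece does not reduce to a single socle class as it did in Theorem~\ref{theorem:trans:p} (where each basis element mapped to a scalar multiple of one fixed class $\mu$), and the direct linear-independence argument you gesture at would require controlling a far larger combinatorial system.

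Two smaller remarks. The class you propose for part~(2), $t^{1/p}[w^p/(x^{p+1}y\,z_1\cdots z_{p-1})]-[\cdots]$, has degree $-(p+1)$, not $-p$, so it does not lie in $H^{p+1}_\frakm(\bar R)$; the paper uses $[w^2/(x^2y\prod z_i)]-t^{1/p}[w^2/(xy^2\prod z_i)]$, of degree $-p$, whose Frobenius image visibly cancels because the $xy^p$ and $tx^{p+1}$ terms of $w^{p+1}$ produce the same surviving monomial. Your stated degree constraint $a=b+c+\sum d_i-(d-1)$ also has a sign slip; the correct relation forced by $\deg=-p$ is $a=1+b+c+\sum d_i$, giving $b+c+\sum d_i\le p-1$.
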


\begin{proof}
We begin with the hypersurface
\[
A\colonequals\FFp[t,w,x,y,\,z_1,\dots,z_{p-1}]/(w^{p+1}-tx^{p+1}-xy^p-\sum_i z_i^{p+1}).
\]
The Jacobian criterion shows that, up to radical, the defining ideal of the singular locus of~$A$ contains~$(w,x,y,z_1,\dots,z_{p-1})$. The ring $S$ is obtained from $A$ by inverting an appropriate multiplicative set; it follows that $S$ has an isolated singular point at its homogeneous maximal ideal $\frakn$. In particular, $S$ is normal by Serre's criterion.

To prove that $R$ is $F$-rational, it suffices by Corollary~\ref{corollary:fedder:watanabe} to verify that
\begin{equation}
\label{equation:small:transc:action}
F\colon {[H^{p+1}_\frakn(S)]}_{-p} \to {[H^{p+1}_\frakn(S)]}_{-p^2}
\end{equation}
is injective. Using the~\v Cech complex on~$x,y,z_1\dots,z_{p-1}$, the vector space ${[H^{p+1}_\frakn(S)]}_{-p}$ has a $K$-basis, as in~\eqref{equation:basis}, consisting of cohomology classes
\[
\eta_{\alpha,\beta,\bgamma}\colonequals\left[\frac{w^{1+\alpha+\beta+\sum\gamma_i}}{x^{\alpha+1}\,y^{\beta+1}\,\prod\limits_i z_i^{\gamma_i+1}}\right]
\]
where $\alpha,\beta,\gamma_1,\dots,\gamma_{p-1}$ are nonnegative integers with $\alpha+\beta+\sum\gamma_i\ \le\ p-1$. The ring $S$ admits a $(\ZZ/(p+1))^{p+1}$-grading with
\[
\deg z_i = e_i,\ \ \deg w = e_p,\ \ \deg x = e_{p+1} = \deg y,
\]
where $e_1,\dots,e_{p+1}$ denote standard basis vectors modulo $p+1$. Since $\gcd(p,\ p+1)=1$, the action~\eqref{equation:small:transc:action} maps distinct multigraded components to distinct multigraded components, so it suffices to verify the injectivity componentwise. Note that 
\[
\deg\eta_{\alpha,\beta,\bgamma}\ =\ \Big(-\gamma_1-1,\ \dots,\ -\gamma_{p-1}-1,\ 1+\alpha+\beta+\sum\limits_i\gamma_i,\ -\alpha-\beta-2\Big)
\]
with respect to the multigrading. Thus, for fixed nonnegative integers $k$ and $\gamma_i$ with
\[
0\ \le\ k+\sum\limits_i\gamma_i\ \le\ p-1,
\]
a homogeneous element of ${[H^{p+1}_\frakn(S)]}_{-p}$ with multidegree
\[
\Big(-\gamma_1-1,\ \dots,\ -\gamma_{p-1}-1,\ 1+k+\sum\limits_i\gamma_i,\ -k-2\Big)
\]
has the form
\[
\sum_{\alpha+\beta = k} c_\alpha \eta_{\alpha,\beta,\bgamma},
\]
where $\alpha$ and $\beta$ are nonnegative integers with $\alpha+\beta=k$, and $c_\alpha\in K$.

Set $m\colonequals k+\sum\gamma_i$, and suppose that the above element
\begin{equation}
\label{equation:ker:element:1}
\sum_{\alpha+\beta = k} c_\alpha \eta_{\alpha,\beta,\bgamma}\ =\
\sum_{\alpha+\beta = k} c_\alpha x^\beta y^\alpha \left[\frac{w^{m+1}}{x^{k+1}\,y^{k+1}\,\prod\limits_i z_i^{\gamma_i+1}}\right]
\end{equation}
belongs to the kernel of the Frobenius action. Then
\[
\Big(\sum_{\alpha+\beta = k} c^p_\alpha x^{\beta\! p} y^{\alpha\! p}\Big) w^{(m+1)p}
\]
belongs to the ideal
\[
\Big(x^{(k+1)p},\ y^{(k+1)p},\ z_1^{(\gamma_1+1)p},\ \dots,\ z_{p-1}^{(\gamma_{p-1}+1)p}\Big)S.
\]
Since $w^{(m+1)p}\ =\ w^{p-m}\,w^{(p+1)m}$ and $1\le p-m \le p$, it follows that 
\begin{equation}
\label{equation:ker:element:2}
\Big(\sum_{\alpha+\beta = k} c^p_\alpha x^{\beta\! p} y^{\alpha\! p}\Big) \Big(tx^{p+1} + xy^p + \sum_{i=1}^{p-1}z_i^{p+1}\Big)^m
\end{equation}
belongs to the monomial ideal
\begin{equation}
\label{equation:ker:element:3}
\Big(x^{(k+1)p},\ y^{(k+1)p},\ z_1^{(\gamma_1+1)p},\ \dots,\ z_{p-1}^{(\gamma_{p-1}+1)p}\Big)
\end{equation}
in the polynomial ring~$K[x,y,\,z_1,\dots,z_{p-1}]$. Bearing in mind that $m=k+\sum\gamma_i$, the terms in the multinomial expansion of~\eqref{equation:ker:element:2} that include the monomial
\[
\prod\limits_i z_i^{(p+1)\gamma_i}
\]
constitute the polynomial 
\[
\binom{m}{k,\gamma_1,\dots,\gamma_{p-1}}\Big(\sum_{\alpha+\beta = k} c^p_\alpha x^{\beta\! p} y^{\alpha\! p}\Big) (tx^{p+1} + xy^p)^k \prod\limits_i z_i^{(p+1)\gamma_i}
\]
which, therefore, also belongs to the monomial ideal~\eqref{equation:ker:element:3}. But then
\[
\Big(\sum_{\alpha+\beta = k} c^p_\alpha x^{\beta\! p} y^{\alpha\! p}\Big) (tx^{p+1} + xy^p)^k
\ \in\ \big(x^{(k+1)p},\ y^{(k+1)p}\big)
\]
in the polynomial ring~$K[x,y]$. This implies that the coefficient of $x^{kp+k}y^{kp}$ in the polynomial above must be zero, i.e., that
\[
\sum_{\alpha+\beta = k}\binom{k}{\alpha} c_\alpha^p t^\alpha \ =\ 0.
\]
Since $c_\alpha^p\in K^p$ for each $\alpha$, and $k < [K^p(t):K^p]=p$, this forces each $c_\alpha$ to be zero. But then the element~\eqref{equation:ker:element:1} is zero, so the map~\eqref{equation:small:transc:action} is indeed injective as claimed. This completes the proof of~\eqref{small:transc:1}.

For~\eqref{small:transc:2}, let $\frakm$ denote the homogeneous maximal ideal of $R$, and let $\bar{R}$ denote either of~$R\otimes_K K^{1/p}$ or $R\otimes_K\bar{K}$. Then
\[
\left[\frac{w^2}{x^2y\prod\limits_i z_i}\right]\ -\ t^{1/p} \left[\frac{w^2}{xy^2\prod\limits_i z_i}\right]\ \in\ H^{p+1}_\frakm(\bar{R})
\]
is a nontrivial linear combination of basis elements as in~\eqref{equation:basis}. The ring $\bar{R}$ is not $F$-injective since under the Frobenius action on $H^{p+1}_\frakm(\bar{R})$, this element maps to
\[
\left[\frac{w^{p-1}tx}{x^py^p\prod\limits_i z_i^p}\right]\ -\ t\left[\frac{w^{p-1}x}{x^py^p\prod\limits_i z_i^p}\right]\ =\ 0.
\]

For~\eqref{small:transc:3} use $w',x',y',z_i'$ for the second copy of $S$, and proceed as in the proof of Theorem~\ref{theorem:trans:p}. Using $\frakM$ for the homogeneous maximal ideal of $R\otimes_K R$, the cohomology class
\[
\left[\frac{(ww')^2\,(x'y-xy')}{(xx'yy')^2\,\prod\limits_i z_i\,\prod\limits_i z_i'}\right]\ \in\ H^{2p+2}_\frakM(R\otimes_K R)
\]
is a nontrivial linear combination of basis elements as in~\eqref{equation:basis:enveloping}, and is in the kernel of the Frobenius action on $H^{2p+2}_\frakM(R\otimes_K R)$. It follows that the ring $R\otimes_K R$ is not $F$-injective.
\end{proof}

Theorem~\ref{theorem:main:theorem} follows readily from the results of this section:

\begin{proof}[Proof of Theorem~\ref{theorem:main:theorem}]
Let $K$ and $R$ be as in Theorem~\ref{theorem:trans:p} or~\ref{theorem:small:transc}, and let $S\colonequals R \otimes_K K^{1/p}$ or~$R \otimes_K \overline K$. An example is then obtained after localizing at the homogeneous maximal ideals; note that the closed fiber is the field $K^{1/p}$ or $\overline K$ in the respective cases.
\end{proof}

\subsection*{Acknowledgments}

We are grateful to Ofer Gabber, Linquan Ma, Karl Schwede, and Kei-ichi Watanabe for helpful conversations, and to the referees for a number of useful comments and suggestions.


\end{document}